\definecolor{color0}{gray}{.50}
\definecolor{color1}{rgb}{0,.2,.8}
\definecolor{color2}{rgb}{1,.2,0}
\definecolor{color3}{rgb}{.8,.5,1}
\numberwithin{equation}{section}
\newtheorem{theorem}{Theorem}[section]
\newtheorem{lemma}[theorem]{Lemma}
\newtheorem{proposition}[theorem]{Proposition}
\newtheorem{definition}[theorem]{Definition}
\newtheorem{assumption}[theorem]{Assumption}
\newtheorem{remark}[theorem]{Remark}
\newcommand{\leref}{Lemma~\ref}
\newcommand{\prref}{Proposition~\ref}
\newcommand{\thref}{Theorem~\ref}
\newcommand{\reref}{Remark~\ref}
\newcommand{\asref}{Assumption~\ref}
\newcommand{\E}{\mathbb{E}}
\newcommand{\PP}{\mathbb{P}}
\newcommand{\R}{\mathbb{R}}
\newcommand{\eps}{\epsilon}
\newcommand{\cT}{\mathcal{T}}
\newcommand{\T}{\mathbb{T}}
\newcommand{\kT}{\mathfrak{T}}
\newcommand{\cF}{\mathcal{F}}
\newcommand{\N}{\mathbb{N}}
\title[]{Non-zero-sum stopping games in continuous time}
\author[]{Zhou Zhou}\thanks{The author would like to thank Erhan Bayraktar for helpful discussions}
\address{Institute for Mathematics and its Applications, University of Minnesota}
\email{zhouzhou@umich.edu}
\date{\today}
\keywords{Non-zero-sum, stopping games, $\eps$-Nash equilibrium}
\begin{document}
\maketitle
\begin{abstract}
On a filtered probability space $(\Omega,\mathcal{F},(\mathcal{F}_t)_{t\in[0,\infty]},\PP)$, we consider the two-player non-zero-sum stopping game $u^i:=\E[U^i(\rho,\tau)],\ i=1,2$, where the first player choose a stopping strategy $\rho$ to maximize $u^1$ and the second player chose a stopping strategy $\tau$ to maximize $u^2$. Unlike the Dynkin game, here we assume that $U(s,t)$ is $\mathcal{F}_{s\vee t}$-measurable. Assuming the continuity of $U^i$ in $(s,t)$, we show that there exists an $\eps$-Nash equilibrium for any $\eps>0$.
\end{abstract}

\section{Introduction}
On a filtered probability space $(\Omega,\mathcal{F},(\mathcal{F}_t)_{t\in[0,\infty]},\PP)$, we consider the two-player non-zero-sum stopping game $u^i(\rho,\tau):=\E[U^i(\rho,\tau)],\ i=1,2$, where the first player choose a stopping strategy $\rho$ to maximize $u^1$ and the second player chose a stopping strategy $\tau$ to maximize $u^2$. Unlike the Dynkin game, here we assume that $U(s,t)$ is $\mathcal{F}_{s\vee t}$-measurable instead of $\mathcal{F}_{s\wedge t}$-measurable. That is, the game ends at the maximum of $\rho$ and $\tau$. Moreover, $\rho$ and $\tau$ are not stopping times. They are some strategies satisfying some non-anticipativity condition.  By assuming the continuity of $U^i$ in $(s,t)$, we show that there exists an $\eps$-Nash equilibrium for any $\eps>0$.

We first provide some results in the zero-sum case. In particular, we construct an $\eps$-saddle point. (A more general version for the zero-sum case has been studied in \cite{ZZ7}.) Then we construct an $\eps$-Nash equilibrium of the non-zero-sum game by using the $\eps$-saddle points in the zero-sum case. 

To avoid the technical difficulties stemming from the verification of the path regularity of some related processes, we assume that the state space is at most countable. Besides, we assume that the payoff function $U^i$ is uniformly continuous in $(t,s)$. These assumptions, though restricted, help us concentrate on the interesting parts of the paper. It can be expected that the result still holds in a more general framework (The right continuity of $U^i$ is always in force).

Compared to the non-zero-sum Dynkin game (see e.g., \cite{Zhang3,Solan}), our game is more applicable. In practice, even if a player has made the decision first, her payoff can still be affected by other players' decisions later on. Therefore, it is more reasonable to let the game end at the maximum rather than the minimum of the stopping strategies. As far as we know, this is the first paper studying the stopping games with this feature in a non-zero-sum setup. (Recently there are two papers\cite{ZZ6,ZZ7} on the stopping games with this feature in the zero-sum case.)

The paper is organized as follows. In the next section, we introduce the setup and the main result. In Section 3, we provide some results in the zero-sum case. In Section 4, we use the results from the zero-sum case to construct an $\eps$-Nash equilibrium for the non-zero-sum stopping games.
 
\section{The setup and the main result}
Let $(\Omega,\mathcal{F},(\mathcal{F}_t)_{t\in[0,\infty]},\PP)$ be a filtered probability space, where without loss of generality $\mathcal{F}=\cF_\infty=\cup_{t\in[0,\infty)}\mathcal{F}_t$, and the filtration $(\mathcal{F}_t)_{t\in[0,\infty]}$ satisfies the usual conditions. To avoid the technical difficulties stemming from the verification of the path regularities of some related related processes, we assume that $\Omega$ is at most countable, and that $\PP$ is supported on $\Omega$. Let $\cT$ be the set of stopping times taking values in $[0,\infty]$. For any $\sigma\in\cT$, denote $\cT_\sigma$ (resp. $\cT_{\sigma+}$) as the set of stopping times that is no less (resp. strictly greater) than $\sigma$. Let
$$\T:=\{\phi:[0,\infty]\times\Omega\mapsto[0,\infty]:\ \phi\text{ is }\mathcal{B}([0,\infty))\otimes\mathcal{F}-\text{measurable, and }\forall t\in[0,\infty),\ \phi(t,\cdot)\in\cT_{t+}\}.$$

\begin{definition}
$\rho=(\rho_0,\rho_1)$ is said to be a stopping strategy, if $\rho_0\in\cT$ and $\rho_1\in\T$. Denote the set of stopping strategies as $\kT$.
\end{definition}
For $\sigma\in\cT$ and $\phi\in\kT$, denote $\phi(\sigma):=\phi(\sigma(\cdot),\cdot)$. For any $\rho=(\rho_0,\rho_1),\tau=(\tau_0,\tau_1)\in\kT$, denote
$$\rho[\tau]:=\rho_0 1_{\{\rho_0\leq\tau_0\}}+\rho_1(\tau_0) 1_{\{\rho_0>\tau_0\}}.$$
For $i=1,2$, let $U^i:[0,\infty)\times[0,\infty)\times\Omega\mapsto\R$, such that $U(s,t)$ is $\mathcal{F}_{s\vee t}$-measurable. For simplicity, we assume that $U^i$ is bounded, and for any $s,t\in[0,\infty)$, the limits
$$U^i(s,\infty):=\inf_{b\rightarrow\infty}U^i(s,b),\quad U^i(\infty,t):=\inf_{a\rightarrow\infty}U^i(a,t),\quad U^i(\infty,\infty):=\inf_{a,b\rightarrow\infty}U^i(a,b)$$
exist, $i=1,2$. We make the following assumption on $U^i$.

\begin{assumption}\label{a1}
There exists a bounded non-decreasing function $r:[0,\infty]\mapsto\R_+$ with $\lim_{h\searrow 0}r(h)=r(0)=0$, and
$$\left|U^i(s,t,\cdot)-U^i(s',t',\cdot)\right|\leq r\left(|s-s'|+|t-t'|\right),\quad\forall s,t,s',t'\in[0,\infty],\quad i=1,2.$$
\end{assumption}

Consider the two-player non-zero-sum stopping game
\begin{eqnarray}
\label{e1} u^i(\rho,\tau)&=&\E\left[U^i(\rho[\tau],\tau[\rho])\right]\\
\notag&=&\E\left[U^i(\rho_0,\tau_1(\rho_0))1_{\{\rho_0<\tau_0\}}+U^i(\rho_1(\tau_0),\tau_0)1_{\{\rho_0>\tau_0\}}+U^i(\rho_0,\rho_0)1_{\{\rho_0=\tau_0\}}\right],
\end{eqnarray}
$\rho=(\rho_0,\rho_1),\tau=(\tau_0,\tau_1)\in\kT,\ i=1,2$. Here the first player choose $\rho$ to maximize $u^1$ and the second player choose $\tau$ to maximize $u^2$.

Recall the definition of an $\eps$-Nash equilibrium.
\begin{definition}\label{d1}
For $\eps>0$, $(\rho^*,\tau^*)\in\kT^2$ is said to be an $\eps$-Nash equilibrium for the stopping game \eqref{e1}, if for any $\rho,\tau\in\kT$,
$$u^1(\rho,\tau^*)\leq u^1(\rho^*,\tau^*)+\eps\quad\text{and}\quad u^2(\rho^*,\tau)\leq u^2(\rho^*,\tau^*)+\eps.$$
\end{definition}

Below is the main result of this paper.
\begin{theorem}\label{t1}
Under \asref{a1}, there exists an $\eps$-Nash equilibrium for \eqref{e1} for any $\eps>0$.
\end{theorem}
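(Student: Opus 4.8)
The plan is to peel off, one at a time, the two ``moving parts'' of the game until what remains is a finite‑stage problem solvable by backward induction, and to read off the $\eps$‑Nash equilibrium from that. Throughout I will freely use the zero–sum material of Section~3 (equivalently \cite{ZZ7}): existence of Snell envelopes, of ($\eps$‑)optimal stopping times, and the regularity of the associated value processes; the zero–sum $\eps$‑saddle points serve both as a reference point and as the source of these stopping–theoretic tools.

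\textbf{Step 1: decoupling the second components.} For $t\in[0,\infty)$ put $W^1(t):=\esssup_{\sigma\in\cT_{t+}}\E[U^1(\sigma,t)\mid\cF_t]$ and $W^2(t):=\esssup_{\sigma\in\cT_{t+}}\E[U^2(t,\sigma)\mid\cF_t]$, and let $S^2(t)\in\cT_{t+}$, $S^1(t)\in\cT_{t+}$ be (near‑)optimal for these pure optimal‑stopping problems; I would build $S^1,S^2$ as elements of $\T$ and $W^1,W^2$ with the needed path regularity using Section~3. In \eqref{e1} the component $\rho_1$ enters only through $U^i(\rho_1(\tau_0),\tau_0)1_{\{\rho_0>\tau_0\}}$, so against \emph{any} opponent strategy the choice $\rho_1=S^2$ is (near‑)optimal for player~1, and symmetrically $\tau_1=S^1$ for player~2. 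Hence I look for an equilibrium of the form $\rho^*=(\rho_0^*,S^2)$, $\tau^*=(\tau_0^*,S^1)$, and the problem reduces to the ``reduced game'' in which the players only choose $\rho_0,\tau_0\in\cT$ and the payoffs become
\[
\hat u^i(\rho_0,\tau_0)=\E\Big[\bar A^i(\rho_0)1_{\{\rho_0<\tau_0\}}+\bar B^i(\tau_0)1_{\{\tau_0<\rho_0\}}+U^i(\rho_0,\rho_0)1_{\{\rho_0=\tau_0\}}\Big],
\]
with $\bar A^i(t):=\E[U^i(t,S^1(t))\mid\cF_t]$ and $\bar B^i(t):=\E[U^i(S^2(t),t)\mid\cF_t]$. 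The crucial structural fact is that $\bar A^2(t)=W^2(t)\ge U^2(t,t)$ and $\bar B^1(t)=W^1(t)\ge U^1(t,t)$, since e.g. $W^2(t)\ge\lim_{h\searrow0}\E[U^2(t,t+h)\mid\cF_t]=U^2(t,t)$ by \asref{a1} and boundedness; that is, whenever the two ``primary'' times coincide, the waiting player weakly prefers that the other be the one who stops.

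\textbf{Step 2: discretization.} Fix $\eps>0$. Using boundedness of $U^i$ and the existence of the limits $U^i(\cdot,\infty),U^i(\infty,\infty)$, first truncate to a large finite time $T$ (for $t\ge T$ all the quantities $\bar A^i(t),\bar B^i(t)$ are within $\eps/4$ of $U^i(\infty,\infty)$, so forbidding stops in $(T,\infty)$ costs at most $\eps/4$). Then, by \asref{a1} and the regularity of $W^1,W^2$, choose $\delta>0$ so small that all relevant payoff quantities move by at most $\eps/4$ when times move by $\delta$, and work on the \emph{interlaced} finite grids $G_1=\{0,\delta,\dots,N\delta,\infty\}$ for player~1 and $G_2=\{\delta/2,3\delta/2,\dots,\infty\}$ for player~2, so that $\rho_0$ and $\tau_0$ never collide away from $\infty$. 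For a fixed grid‑valued opponent time, an arbitrary primary deviation of the other player can be rounded to a grid‑valued one that preserves the order relative to the (fixed) opponent time — round down on $\{\rho_0<\tau_0^*\}$, push to $\infty$ on $\{\rho_0\ge\tau_0^*\}$ — changing $\hat u^i$ by at most $O(r(\delta))\le\eps/4$. This is exactly where uniform continuity is used, and it lets me restrict attention to grid‑valued primary strategies.

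\textbf{Step 3: backward induction and verification.} On the interlaced finite grid only one player can act at any grid time, so the reduced game is essentially of perfect information and I run backward induction along $G_1\cup G_2$ on the branch where nobody has yet stopped: at a grid time $t$ of player~$i$, given continuation values $v^1,v^2$ already defined at the next grid time and projected onto $\cF_t$, player~$i$ compares ``stop'' (yielding $(\bar A^1(t),\bar A^2(t))$ if $t\in G_1$, $(\bar B^1(t),\bar B^2(t))$ if $t\in G_2$) with ``continue'' (yielding $(\E[v^1\mid\cF_t],\E[v^2\mid\cF_t])$), picks pointwise on each $\cF_t$‑atom the action maximizing her own payoff, and this defines $v^i(t)$. (If one insists on treating grid times as simultaneous moves, the same works: the $2\times2$ stage game always has a \emph{pure} Nash equilibrium precisely because $\bar A^2\ge U^2(\cdot,\cdot)$ and $\bar B^1\ge U^1(\cdot,\cdot)$ rule out the matching‑pennies configuration.) Finiteness makes this well defined and yields $\rho_0^*,\tau_0^*$ as the first grid time at which the chosen action is ``stop''; by construction each player's payoff is maximized at each of her own decision nodes, so no grid‑valued primary deviation is profitable. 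Undoing Steps~1–2 and collecting the errors ($\le\eps/4$ from near‑optimal responses, $\le\eps/4$ from truncation, $\le\eps/4$ from grid‑rounding of a deviation, nothing from backward induction), $(\rho^*,\tau^*)=((\rho_0^*,S^2),(\tau_0^*,S^1))$ is an $\eps$‑Nash equilibrium in the sense of \deref{d1}.

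\textbf{Main obstacle.} The genuinely delicate point is Step~2, i.e.\ controlling the discretization error: this needs the value processes $W^1,W^2$ — hence $\bar A^i,\bar B^i$ — to be sufficiently regular in $t$ and $\omega$, which is exactly the ``path regularity of related processes'' that the at‑most‑countable state space and \asref{a1} are imposed to keep manageable; the order‑preserving rounding of deviations and the handling of the time $\infty$ also require care. The second, more conceptual, point is checking that fixing $\rho_1=S^2$ and $\tau_1=S^1$ is legitimate simultaneously for both players and that the accumulated errors genuinely stay below $\eps$. The backward‑induction core, once the inequalities $\bar A^2\ge U^2(\cdot,\cdot)$ and $\bar B^1\ge U^1(\cdot,\cdot)$ are in hand, is routine.
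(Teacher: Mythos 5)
Your Step 1 is sound and in fact mirrors the paper: fixing each player's reaction component to be her own near-optimal reaction is exactly the role of $\rho_h^1,\tau_h^2$ there, and your $\bar A^1,\bar B^2$ are the paper's $W^1,W^2$. The gap is in Step 2, and it is not a technicality you can wave at: you ask to ``choose $\delta>0$ so small that all relevant payoff quantities move by at most $\eps/4$ when times move by $\delta$.'' Assumption \ref{a1} gives a uniform modulus only for $U^i(s,t,\omega)$ in $(s,t)$; the quantities that actually enter your reduced game, $\bar A^i(t),\bar B^i(t),W^i(t)$, are conditional expectations with respect to $\cF_t$, and these are only \emph{right-continuous} in $t$ (that is precisely what Lemmas \ref{l1} and \ref{l3} establish). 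They can jump whenever information arrives --- already with a single coin flip revealed at time $1$ there is no $\delta$ with $|\E_{t+\delta}[\xi]-\E_t[\xi]|\le\eps/4$ uniformly in $t$ --- so no uniform-in-$t$ modulus exists, countable $\Omega$ notwithstanding. This also breaks your rounding argument: rounding a deviation $\rho_0$ \emph{down} to $G_1$ does not yield a stopping time (the event $\{[\rho_0/\delta]\delta\le t\}$ is generally not in $\cF_t$), while rounding \emph{up} can flip $\{\rho_0<\tau_0^*\}$ into $\{\rho_0>\tau_0^*\}$ and change the payoff from $\bar A^1(\rho_0)$ to $\bar B^1(\tau_0^*)$, a loss not controlled by $r(\delta)$. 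Since your backward induction only protects against deviations living on the player's own grid, the $\eps$-equilibrium property against arbitrary $\rho_0\in\cT$ is not established; and you cannot repair it by ``fix a near-optimal deviation, then refine the grid,'' because $\tau_0^*$ itself is an output of the backward induction and moves with $\delta$.

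It is worth seeing how the paper sidesteps exactly this. It never needs any uniform regularity: right-continuity is invoked only at two stopping times $\mu_1,\mu_2$ (defined before $\delta$ is chosen, via $\mu_i=\inf\{t:\,v_t^i\le W_t^i+\eps\}$), and arbitrary continuation deviations are handled not by discretization but by a \emph{threat}: off the equilibrium path the opponent switches, after $\mu_i+\delta$, to the $\eps$-saddle point of an auxiliary zero-sum Dynkin game (plus the punishing reaction $\tau_h^1$, resp.\ $\rho_h^2$), so by Remark \ref{r1} the deviator's continuation value is capped by $v_{\mu_i+\delta}^i$, and the submartingale property of $v^i$ on $[0,\mu_i]$ together with the definition of $\mu_i$ closes the estimate. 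Your candidate profile contains no punishment component at all, so even granting a discretization, you would still have to prove that the selfish-reaction profile is an approximate equilibrium of the reduced non-zero-sum Dynkin game against all continuous-time deviations --- which is precisely the difficulty the paper's $\mu_i$/threat construction is built to resolve, and which your ``Main obstacle'' paragraph acknowledges but does not overcome.
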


\section{Zero-sum case}
In this section, we consider \eqref{e1} in the zero-sum case,  i.e., we assume $U^1=-U^2:=U$ throughout this section. For $t\in[0,\infty]$, let
$$X_t:=\inf_{\sigma\in\cT_t}\E_t[U(t,\sigma)],\quad Y_t:=\sup_{\sigma\in\cT_t}\E_t[U(\sigma,t)]\quad\text{and}\quad Z_t:=U(t,t),$$
where $\E_\sigma[\cdot]:=\E[\cdot|\cF_\sigma]$ for any $\sigma\in\cT$. Obviously,
\begin{equation}\label{e11}
X_t\leq Z_t\leq Y_t.
\end{equation}
\begin{lemma}\label{l1}
The processes $(X_t)_{t\in[0,\infty]}$ and $(Y_t)_{t\in[0,\infty]}$ are right continuous. 
\end{lemma}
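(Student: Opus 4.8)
The plan is to deduce the right-continuity of $(X_t)$ and $(Y_t)$ from the classical right-continuity of the value process of an optimal stopping problem with bounded right-continuous reward, using \asref{a1} to absorb the dependence of $U$ on its time arguments into a vanishing error term.

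Fix $t\in[0,\infty)$ and introduce, for $s\in[t,\infty]$, the auxiliary value processes
\[
X^{(t)}_s:=\essinf_{\sigma\in\cT_s}\E_s\!\left[U(t,\sigma)\right],\qquad Y^{(t)}_s:=\esssup_{\sigma\in\cT_s}\E_s\!\left[U(\sigma,t)\right].
\]
For $\sigma\in\cT_s\subseteq\cT_t$ (recall $s\geq t$), both $U(t,\sigma)$ and $U(\sigma,t)$ are $\cF_{t\vee\sigma}=\cF_\sigma$-measurable, and by \asref{a1} the reward processes $(U(t,\sigma))_{\sigma\in[t,\infty]}$ and $(U(\sigma,t))_{\sigma\in[t,\infty]}$ are bounded and right-continuous in $\sigma$. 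Hence $X^{(t)}$ and $Y^{(t)}$ are the value processes of, respectively, a minimization and a maximization optimal stopping problem on $[t,\infty]$ with bounded right-continuous reward, so the classical theory yields right-continuous modifications of them; since $\Omega$ is at most countable, this a.s.\ right-continuity upgrades to genuine right-continuity, so $s\mapsto X^{(t)}_s$ and $s\mapsto Y^{(t)}_s$ are right-continuous. In particular $\lim_{s\searrow t}X^{(t)}_s=X^{(t)}_t=X_t$ and $\lim_{s\searrow t}Y^{(t)}_s=Y^{(t)}_t=Y_t$.

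It then remains to compare $X_s$ with $X^{(t)}_s$ and $Y_s$ with $Y^{(t)}_s$ for $s\in[t,\infty]$. For every $\sigma\in\cT_s$, \asref{a1} gives $|U(s,\sigma)-U(t,\sigma)|\leq r(s-t)$ and $|U(\sigma,s)-U(\sigma,t)|\leq r(s-t)$, hence $|\E_s[U(s,\sigma)]-\E_s[U(t,\sigma)]|\leq r(s-t)$ and similarly for the other pair; taking the essential infimum (resp.\ supremum) over $\sigma\in\cT_s$ gives $|X_s-X^{(t)}_s|\leq r(s-t)$ and $|Y_s-Y^{(t)}_s|\leq r(s-t)$. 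Combining this with the previous paragraph and $\lim_{h\searrow 0}r(h)=0$,
\[
\limsup_{s\searrow t}|X_s-X_t|\leq\limsup_{s\searrow t}\bigl(|X_s-X^{(t)}_s|+|X^{(t)}_s-X^{(t)}_t|\bigr)\leq\lim_{s\searrow t}r(s-t)=0,
\]
and the same bound holds for $|Y_s-Y_t|$, proving the lemma.

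I expect the only delicate point to be the passage from ``$X^{(t)}$ and $Y^{(t)}$ have right-continuous modifications'' to ``$X^{(t)}$ and $Y^{(t)}$ are themselves right-continuous'': on a general probability space this needs an extra argument, but it is precisely the sort of path-regularity issue that the standing assumption that $\Omega$ is at most countable is meant to trivialize. A self-contained alternative would be to prove right-continuity of $X^{(t)}$ directly, via near-optimal stopping times together with the backward martingale convergence of $\E_s[\cdot]$ to $\E_{t+}[\cdot]=\E_t[\cdot]$ as $s\searrow t$, but keeping track of the changing conditioning level makes that route more cumbersome than the reduction to the classical Snell-envelope regularity.
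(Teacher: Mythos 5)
Your proposal is correct and follows essentially the same route as the paper: freeze the first time argument at $t$ to get an auxiliary Snell-type value process (your $X^{(t)}$ is the paper's $\tilde X$), invoke the classical right-continuity of the value process of an optimal stopping problem with bounded right-continuous reward (the paper cites Karatzas--Shreve, Theorem D.7) for that auxiliary process, and use Assumption \ref{a1} to bound the discrepancy $|X_s-X^{(t)}_s|$ by $r(s-t)$. The only cosmetic differences are that you treat $X$ and $Y$ symmetrically and make explicit the modification-versus-indistinguishability point that the countability of $\Omega$ is meant to dispose of.
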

\begin{proof}
Fix $t\in[0,\infty)$. Let $t_n\searrow t$ with $|t_n-t|<1/n$. Let
$$\tilde X_v:=\inf_{\sigma\in\cT_v}\E_v\left[U(t,\sigma)\right],\quad v\in\cT_t.$$
Then $\tilde X_t=X_t$. We have that
$$\limsup_{n\rightarrow\infty}|X_{t_n}-X_{t}|\leq\limsup_{n\rightarrow\infty}|X_{t_n}-\tilde X_{t_n}|+\limsup_{n\rightarrow\infty}|\tilde X_{t_n}-\tilde X_t|.$$
Now
\begin{eqnarray}
\notag\limsup_{n\rightarrow\infty}|X_{t_n}-\tilde X_{t_n}|&=&\limsup_{n\rightarrow\infty}\left|\inf_{\sigma\in\cT_{t_n}}\E_{t_n}\left[U(t_n,\sigma)\right]-\inf_{\sigma\in\cT_{t_n}}\E_{t_n}\left[U(t,\sigma)\right]\right|\\
\notag&\leq&\limsup_{n\rightarrow\infty}\sup_{\sigma\in\cT_{t_n}}\E_{t_n}\left|U(t_n,\sigma)-U(t,\sigma)\right|\\
\notag&\leq&\limsup_{n\rightarrow\infty}\,r(1/n)\\
\notag&=&0.
\end{eqnarray}
By \cite[Theorem D.7]{KS2}, $\limsup_{n\rightarrow\infty}|\tilde X_{t_n}-\tilde X_t|=0$. Hence, the result follows.
\end{proof}

Let
$$V(\rho_0,\tau_0):=\E\left[X_{\rho_0} 1_{\{\rho_0<\tau_0\}}+Y_{\tau_0} 1_{\{\rho_0>\tau_0\}}+Z_{\rho_0} 1_{\{\rho_0=\tau_0\}}\right],\quad\rho,\tau\in\cT,$$
and consider the zero-sum Dynkin game
\begin{equation}\label{e12}
\underline v:=\sup_{\rho_0\in\cT}\inf_{\tau_0\in\cT}V(\rho_0,\tau_0)\quad\text{and}\quad\overline v:=\inf_{\tau_0\in\cT}\sup_{\rho_0\in\cT}V(\rho_0,\tau_0).
\end{equation}
By \eqref{e11} and \leref{l1},  and the game has a value and there exists an $\eps$-saddle point $(\rho_0^\eps,\tau_0^\eps)\in\cT^2$ for any $\eps>0$,. That is, $\underline v=\overline v$ and for any $\rho_0,\tau_0\in\cT$,
$$V(\rho_0,\tau_0^\eps)-\eps<V(\rho_0^\eps,\tau_0^\eps)<V(\rho_0^\eps,\tau_0)+\eps.$$

Now let $h>0$. For each $n\in\N$, let $\bar\rho(nh)\in\cT_{nh}$ be the $\eps$-optimizer for $Y_{nh}$. That is,
$$\E_{nh}\left[U(\bar\rho(nh),nh)\right]\geq\sup_{\sigma\in\cT_{nh}}\E\left[U(\sigma,nh)\right]-\eps.$$
Define $\rho_h:[0,\infty]\times\Omega\mapsto[0,\infty]$,
\begin{equation}\label{e14}
\rho_h(t):=\bar\rho\left(\left(\left[t/h\right]+1\right)h\right),\quad t\in[0,\infty),
\end{equation}
and $\rho_h(\infty)=\infty$. Then it is easy to see that $\rho_h\in\T$. Similarly, let $\bar\tau(nh)$ be the $\eps$-optimizer for $X_{nh}$, and define $\tau_h:[0,\infty]\times\Omega\mapsto[0,\infty]$,
\begin{equation}\label{e15}
\tau_h(t):=\bar\tau\left(\left(\left[t/h\right]+1\right)h\right),\quad t\in[0,\infty),
\end{equation}
and $\tau_h(\infty)=\infty$. Observe that $\rho_h(\cdot,\omega)$ and $\tau_h(\cdot,\omega)$ are right continuous for each $\omega\in\Omega$. This right continuity will play an important role in the proof of \thref{t1} in the next section.

\begin{lemma}\label{l2}
For any $\eps>0$, there exists $h'>0$, such that for any $h\in(0,h')$,
\begin{equation}\label{e3}
X_{\rho_0}>\E_{\rho_0}\left[U(\rho_0,\tau_h(\rho_0))\right]-2\eps\quad\text{and}\quad Y_{\tau_0}<\E_{\tau_0}\left[U(\rho_h(\tau_0),\tau_0)\right]+2\eps,\quad\forall\rho_0,\tau_0\in\cT,
\end{equation}
\end{lemma}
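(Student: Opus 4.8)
The plan is to prove the first inequality in \eqref{e3}; the second is symmetric. Fix $\eps>0$ and $\rho_0\in\cT$. The quantity $X_{\rho_0}=\inf_{\sigma\in\cT_{\rho_0}}\E_{\rho_0}[U(\rho_0,\sigma)]$ is a pointwise (a.s.) infimum over stopping times dominating $\rho_0$, so the strategy is to compare the value obtained by the concrete strategy $\tau_h(\rho_0)$ against this infimum. By definition \eqref{e15}, $\tau_h(\rho_0)=\bar\tau(([\rho_0/h]+1)h)$, where $\bar\tau(nh)$ is the $\eps$-optimizer for $X_{nh}=\inf_{\sigma\in\cT_{nh}}\E_{nh}[U(nh,\sigma)]$. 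Write $n_h:=[\rho_0/h]+1$ (a random index) and $s_h:=n_h h$, so $\rho_0<s_h\le\rho_0+h$ and $\tau_h(\rho_0)=\bar\tau(s_h)\in\cT_{s_h}\subseteq\cT_{\rho_0}$.

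The key estimate has three parts, all controlled by the modulus $r$ from \asref{a1}. First, since $\bar\tau(s_h)$ is an $\eps$-optimizer for $X_{s_h}$, we have $\E_{s_h}[U(s_h,\bar\tau(s_h))]\le X_{s_h}+\eps$, hence taking $\E_{\rho_0}[\cdot]$ and using the tower property, $\E_{\rho_0}[U(s_h,\tau_h(\rho_0))]\le\E_{\rho_0}[X_{s_h}]+\eps$. Second, replace $U(s_h,\tau_h(\rho_0))$ by $U(\rho_0,\tau_h(\rho_0))$ at a cost of $r(h)$, since $|s_h-\rho_0|\le h$ and \asref{a1} gives a pathwise bound $|U(s_h,\cdot)-U(\rho_0,\cdot)|\le r(h)$. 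Third, I must compare $\E_{\rho_0}[X_{s_h}]$ with $X_{\rho_0}$. This is the step needing \leref{l1}: the process $(X_t)$ is right continuous, $X$ is bounded, and $s_h\searrow\rho_0$ as $h\to0$ (uniformly, $s_h-\rho_0\le h$), so by right continuity and bounded convergence $\E[X_{s_h}]\to\E[X_{\rho_0}]$; more carefully, to get the inequality in the lemma I want $\E_{\rho_0}[X_{s_h}]\le X_{\rho_0}+$ (small), which I would obtain by a uniform (in $\rho_0$) estimate — writing $X_{s_h}$ in terms of $X$ at times in the grid and invoking that the modulus of right continuity of $X$ is controlled by $r$ (as the proof of \leref{l1} shows, $|X_{t_n}-X_t|$ is bounded by $r(|t_n-t|)$ plus a term handled by \cite[Theorem D.7]{KS2}). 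Combining the three parts: for $h$ small enough, $\E_{\rho_0}[U(\rho_0,\tau_h(\rho_0))]\le X_{\rho_0}+2\eps$, which is the claim.

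The main obstacle is the third part: making the comparison $\E_{\rho_0}[X_{s_h}]\le X_{\rho_0}+o(1)$ \emph{uniform over all} $\rho_0\in\cT$, rather than for a fixed $\rho_0$. A fixed-$\rho_0$ argument via dominated convergence gives a threshold $h'$ depending on $\rho_0$, which is not enough. The resolution I would use is that the bound from \leref{l1} is genuinely quantitative — $X$ has a deterministic modulus of right continuity dominated by a function tending to $0$ (one inspects the two terms in the proof of \leref{l1}: the first is $r(1/n)$, and the second, from \cite[Theorem D.7]{KS2}, is the right continuity of a Snell-type envelope which again inherits the modulus $r$). Since the gap $s_h-\rho_0$ is at most $h$ deterministically, this yields a bound on $|X_{s_h}-X_{\rho_0}|$ by a deterministic quantity $\delta(h)\to0$, uniformly in $\rho_0$, and then $h'$ can be chosen so that $r(h')+\delta(h')<2\eps$, finishing the proof for all $\rho_0$ at once. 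The second inequality of \eqref{e3} follows by the same argument with the roles of the players, $X$ and $Y$, and $\tau_h$ and $\rho_h$ interchanged, using that $\bar\rho(nh)$ is the $\eps$-optimizer for $Y_{nh}$.
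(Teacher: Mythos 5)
Your decomposition is the right one (and matches the paper's for the first two pieces): use the $\eps$-optimality of $\bar\tau(s_h)$ at the grid point $s_h=([\rho_0/h]+1)h$ together with the tower property, and pay $r(h)$ to move the first time argument from $s_h$ back to $\rho_0$. The gap is in your third step. You claim that the comparison $\E_{\rho_0}[X_{s_h}]\leq X_{\rho_0}+\delta(h)$ can be obtained uniformly in $\rho_0$ because the proof of \leref{l1} gives $X$ a \emph{deterministic} modulus of right continuity dominated by $r$, asserting in particular that the term handled by \cite[Theorem D.7]{KS2} ``inherits the modulus $r$.'' That is not true. The right continuity of the Snell-type envelope $\tilde X_v=\essinf_{\sigma\in\cT_v}\E_v[U(t,\sigma)]$ comes from the regularity of the filtration and of the associated (super)martingales, not from \asref{a1}; it is purely qualitative and carries no rate uniform in $\omega$ or in the (stopping) time. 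Moreover, what you actually need is not right continuity at a fixed point but a uniform bound on forward increments $X_{s_h}-X_{\rho_0}$ over all stopping times $\rho_0$, which is strictly stronger and fails pathwise: take $U(s,t,\omega)=g(\omega)$ independent of $(s,t)$ (so $r\equiv 0$ works in \asref{a1}) with a filtration that releases the information about $g$ at time $1$; then $X_t=\E_t[g]$ satisfies $|X_{s}-X_{\rho_0}|=\tfrac12$ for $\rho_0$ just below $1$ and any $s\geq 1$, no matter how small $s-\rho_0$ is. So no deterministic $\delta(h)\to 0$ of the kind you invoke exists, and with it the uniform choice of $h'$ collapses.

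The inequality you need is nevertheless true, with the deterministic bound $2r(h)$, but it must be proved through conditional expectations rather than pathwise regularity of $X$ --- and this is exactly how the paper argues. Since the family $\{\E_{s_h}[U(s_h,\sigma)]:\sigma\in\cT_{s_h}\}$ is directed downward, conditional expectation commutes with the essential infimum, so
\begin{equation*}
\E_{\rho_0}\left[X_{s_h}\right]=\essinf_{\sigma\in\cT_{s_h}}\E_{\rho_0}\left[U(s_h,\sigma)\right]
\leq \essinf_{\sigma\in\cT_{s_h}}\E_{\rho_0}\left[U(\rho_0,\sigma)\right]+r(h)
=\essinf_{\sigma\in\cT_{\rho_0}}\E_{\rho_0}\left[U(\rho_0,\sigma\vee s_h)\right]+r(h),
\end{equation*}
and since $0\leq \sigma\vee s_h-\sigma\leq s_h-\rho_0\leq h$ for $\sigma\in\cT_{\rho_0}$, \asref{a1} gives $\E_{\rho_0}[X_{s_h}]\leq X_{\rho_0}+2r(h)$, uniformly over $\rho_0\in\cT$. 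Plugging this into your first two steps recovers the lemma with any $h'$ such that $r(h')<\eps/3$, which is the paper's proof (written there for the $Y$/$\rho_h$ inequality, with the exchange of $\sup$ and conditional expectation and the $\sigma\vee\tau_{0h}$ trick playing precisely this role). In short: keep your steps one and two, but replace the ``uniform modulus of $X$'' argument by the lattice-property computation above.
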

\begin{proof}
For $\tau_0\in\cT$, denote $\tau_{0h}=([\tau_0/h]+1)h$. Then
\begin{eqnarray}
\notag&&\left|Y_{\tau_0}-\E_{\tau_0}\left[U(\rho_h(\tau_0),\tau_0)\right]\right|\\
\notag&\leq&\left|\sup_{\sigma\in\cT_{\tau_0}}\E_{\tau_0}\left[U(\sigma,\tau_0)\right]-\E_{\tau_0}\left[U(\bar\rho(\tau_{0h}),\tau_{0h})\right]\right|+\left|\E_{\tau_0}\left[U(\bar\rho(\tau_{0h}),\tau_{0h})\right]-\E_{\tau_0}\left[U(\bar\rho(\tau_{0h}),\tau_0)\right]\right|\\
\notag &\leq&\left|\sup_{\sigma\in\cT_{\tau_0}}\E_{\tau_0}\left[U(\sigma,\tau_0)\right]-\E_{\tau_0}\left[\E_{\tau_{0h}}\left[U(\bar\rho(\tau_{0h}),\tau_{0h})\right]\right]\right|+r(h)\\
\notag&\leq&\left|\sup_{\sigma\in\cT_{\tau_0}}\E_{\tau_0}\left[U(\sigma,\tau_0)\right]-\E_{\tau_0}\left[\sup_{\sigma\in\cT_{\tau_{0h}}}\E_{\tau_{0h}}\left[U(\sigma,\tau_{0h})\right]\right]\right|+\eps+r(h)\\
\notag&\leq&\left|\sup_{\sigma\in\cT_{\tau_0}}\E_{\tau_0}\left[U(\sigma,\tau_0)\right]-\E_{\tau_0}\left[\sup_{\sigma\in\cT_{\tau_{0h}}}\E_{\tau_{0h}}\left[U(\sigma,\tau_0)\right]\right]\right|+\eps+2r(h)\\
\notag&=&\left|\sup_{\sigma\in\cT_{\tau_0}}\E_{\tau_0}\left[U(\sigma,\tau_0)\right]-\sup_{\sigma\in\cT_{\tau_{0h}}}\E_{\tau_0}\left[U(\sigma,\tau_0)\right]\right|+\eps+2r(h)\\
\notag&=&\left|\sup_{\sigma\in\cT_{\tau_0}}\E_{\tau_0}\left[U(\sigma,\tau_0)\right]-\sup_{\sigma\in\cT_{\tau_0}}\E_{\tau_0}\left[U(\sigma\vee\tau_{0h},\tau_0)\right]\right|+\eps+2r(h)\\
\notag&\leq&\sup_{\sigma\in\cT_{\tau_0}}\E_{\tau_0}\left|U(\sigma,\tau_0)-U(\sigma\vee\tau_h,\tau_0)\right|+\eps+2r(h)\\
\notag&\leq&3r(h)+\eps.
\end{eqnarray}
Now choose $h'>0$, such that $r(h')<\eps/3$. Then the result follows.
\end{proof}

\begin{proposition}\label{p1}
For any $\eps>0$, let $\rho^\eps=(\rho_0^\eps,\rho_h)$ and $\tau^\eps=(\tau_0^\eps,\tau_h)$, where $(\rho_0^\eps,\tau_0^\eps)$ is a saddle point for \eqref{e12}, and $h$ is small enough such that \eqref{e3} holds. Then $(\rho^\eps,\tau^\eps)$ is a $5\eps$-Nash equilibrium for \eqref{e1} when $U^1=-U^2=U$. Hence \thref{t1} holds in the zero-sum case.
\end{proposition}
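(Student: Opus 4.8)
The plan is to use the zero-sum structure to collapse \deref{d1} to a single two-sided estimate, and then to sandwich $u:=u^1=-u^2$ between the Dynkin-game functional $V$ and invoke the $\eps$-saddle property of $(\rho_0^\eps,\tau_0^\eps)$. Concretely, since here $u^1(\rho,\tau)=\E[U(\rho[\tau],\tau[\rho])]=:u(\rho,\tau)$ and $u^2=-u$, the claim ``$(\rho^\eps,\tau^\eps)$ is a $5\eps$-Nash equilibrium'' is exactly the two statements: $u(\rho,\tau^\eps)\le u(\rho^\eps,\tau^\eps)+5\eps$ for every $\rho\in\kT$, and $u(\rho^\eps,\tau)\ge u(\rho^\eps,\tau^\eps)-5\eps$ for every $\tau\in\kT$ (the latter being $u^2(\rho^\eps,\tau)\le u^2(\rho^\eps,\tau^\eps)+5\eps$ after multiplying by $-1$).

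First I would establish, for arbitrary $\rho=(\rho_0,\rho_1),\tau=(\tau_0,\tau_1)\in\kT$, the four comparisons
\begin{align*}
u(\rho,\tau^\eps)&\le V(\rho_0,\tau_0^\eps)+2\eps, & u(\rho^\eps,\tau)&\ge V(\rho_0^\eps,\tau_0)-2\eps,\\
u(\rho^\eps,\tau^\eps)&\le V(\rho_0^\eps,\tau_0^\eps)+2\eps, & u(\rho^\eps,\tau^\eps)&\ge V(\rho_0^\eps,\tau_0^\eps)-2\eps.
\end{align*}
Each of these follows by splitting the expectation in \eqref{e1} over the three events $\{\rho_0<\tau_0\}$, $\{\rho_0>\tau_0\}$, $\{\rho_0=\tau_0\}$, which are measurable with respect to both $\cF_{\rho_0}$ and $\cF_{\tau_0}$ (standard for a pair of stopping times) and hence can be pulled out of the relevant conditional expectation after applying the tower property. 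On $\{\rho_0=\tau_0\}$ the integrand is precisely $U(\rho_0,\rho_0)=Z_{\rho_0}$, which is the matching piece of $V$. On the other two events one uses two families of bounds: the elementary ones coming from the definitions of $X$ and $Y$ — e.g. $\rho_1(\tau_0^\eps)\in\cT_{\tau_0^\eps+}\subset\cT_{\tau_0^\eps}$ gives $\E_{\tau_0^\eps}[U(\rho_1(\tau_0^\eps),\tau_0^\eps)]\le Y_{\tau_0^\eps}$, and similarly $\E_{\rho_0^\eps}[U(\rho_0^\eps,\tau_1(\rho_0^\eps))]\ge X_{\rho_0^\eps}$, $\E_{\tau_0^\eps}[U(\rho_h(\tau_0^\eps),\tau_0^\eps)]\le Y_{\tau_0^\eps}$ and $\E_{\rho_0^\eps}[U(\rho_0^\eps,\tau_h(\rho_0^\eps))]\ge X_{\rho_0^\eps}$ — and the two one-sided estimates of \leref{l2}, namely $\E_{\rho_0}[U(\rho_0,\tau_h(\rho_0))]<X_{\rho_0}+2\eps$ and $\E_{\tau_0}[U(\rho_h(\tau_0),\tau_0)]>Y_{\tau_0}-2\eps$. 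On each of the four inequalities one matches \leref{l2} to the event where the estimate must move $u$ away from $V$ and the trivial $X$-/$Y$-bound to the event where it must move $u$ toward $V$; integrating then gives the displayed bounds, with the $2\eps$ always arising from a single one of the two events so that there is no accumulation.

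Finally I would chain these with the $\eps$-saddle inequalities $V(\rho_0,\tau_0^\eps)<V(\rho_0^\eps,\tau_0^\eps)+\eps$ and $V(\rho_0^\eps,\tau_0)>V(\rho_0^\eps,\tau_0^\eps)-\eps$, valid for all $\rho_0,\tau_0\in\cT$. This yields $u(\rho,\tau^\eps)\le V(\rho_0,\tau_0^\eps)+2\eps<V(\rho_0^\eps,\tau_0^\eps)+3\eps\le u(\rho^\eps,\tau^\eps)+5\eps$, the first $5\eps$-Nash inequality; and $u(\rho^\eps,\tau)\ge V(\rho_0^\eps,\tau_0)-2\eps>V(\rho_0^\eps,\tau_0^\eps)-3\eps\ge u(\rho^\eps,\tau^\eps)-5\eps$, which is the second one after negation. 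As $\rho,\tau$ were arbitrary this proves $(\rho^\eps,\tau^\eps)$ is a $5\eps$-Nash equilibrium, and applying this with $\eps$ replaced by $\eps/5$ gives \thref{t1} in the zero-sum case.

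I do not expect a genuine obstacle here; once the three-event decomposition is written down the argument is bookkeeping. The only points requiring a word of care are: (i) that $\tau_h(\rho_0)$, $\rho_h(\tau_0)$, $\rho_1(\tau_0^\eps)$ and $\tau_1(\rho_0^\eps)$ are bona fide stopping times dominating the (random) time at which they are evaluated, which is immediate from the definition of $\T$; and (ii) the random-time identities $X_\sigma=\essinf_{\theta\in\cT_\sigma}\E_\sigma[U(\sigma,\theta)]$ and $Y_\sigma=\esssup_{\theta\in\cT_\sigma}\E_\sigma[U(\theta,\sigma)]$ for $\sigma\in\cT$, which underlie both the elementary $X$-/$Y$-bounds above and \leref{l2} itself and which hold on the at most countable $\Omega$.
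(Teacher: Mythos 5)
Your proof is correct and follows essentially the same route as the paper: decompose over $\{\rho_0<\tau_0^\eps\}$, $\{\rho_0>\tau_0^\eps\}$, $\{\rho_0=\tau_0^\eps\}$, use the tower property together with \leref{l2} and the trivial bounds from the definitions of $X$ and $Y$ to sandwich $u$ between $V\pm 2\eps$, and then invoke the $\eps$-saddle property of $(\rho_0^\eps,\tau_0^\eps)$ to obtain the $5\eps$ bound. The paper writes exactly this chain of inequalities in one display (treating player 2 ``similarly''), so your four-inequality bookkeeping is just a slightly more explicit organization of the identical argument.
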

\begin{proof}
Let $\rho=(\rho_0,\rho_1)\in\kT$. We have that
\begin{eqnarray}
\notag u^1(\rho,\tau^\eps)&=&\E\left[U(\rho_0,\tau_h(\rho_0))1_{\{\rho_0<\tau_0^\eps\}}+U(\rho_1(\tau_0^\eps),\tau_0^\eps)1_{\{\rho_0>\tau_0^\eps\}}+U(\rho_0,\rho_0)1_{\{\rho_0=\tau_0^\eps\}}\right]\\
\notag &=&\E\left[\E_{\rho_0}\left[U(\rho_0,\tau_h(\rho_0))\right]1_{\{\rho_0<\tau_0^\eps\}}+\E_{\tau_0^\eps}\left[U(\rho_1(\tau_0^\eps),\tau_0^\eps)\right]1_{\{\rho_0>\tau_0^\eps\}}+U(\rho_0,\rho_0)1_{\{\rho_0=\tau_0^\eps\}}\right]\\
\notag &\leq&\E\left[X_{\rho_0}1_{\{\rho_0>\tau_0^\eps\}}+Y_{\tau_0^\eps}1_{\{\rho_0<\tau_0^\eps\}}+Z_{\rho_0}1_{\{\rho_0=\tau_0^\eps\}}\right]+2\eps\\
\notag &\leq&\E\left[X_{\rho_0^\eps}1_{\{\rho_0^\eps>\tau_0^\eps\}}+Y_{\tau_0^\eps}1_{\{\rho_0^\eps<\tau_0^\eps\}}+Z_{\rho_0^\eps}1_{\{\rho_0^\eps=\tau_0^\eps\}}\right]+3\eps\\
\notag &\leq&\E\left[\E_{\rho_0^\eps}\left[U(\rho_0^\eps,\tau_h(\rho_0^\eps))\right]1_{\{\rho_0^\eps<\tau_0^\eps\}}+\E_{\tau_0^\eps}\left[U(\rho_h(\tau_0^\eps),\tau_0^\eps)\right]1_{\{\rho_0^\eps>\tau_0^\eps\}}+U(\rho_0^\eps,\rho_0^\eps)1_{\{\rho_0^\eps=\tau_0^\eps\}}\right]+5\eps\\
\notag &=& u^1(\rho^\eps,\tau^\eps)+5\eps.
\end{eqnarray}
Similarly, we can show that
$$u^2(\rho^\eps,\tau)\leq u^2(\rho^\eps,\tau^\eps)+5\eps.$$
\end{proof}

\begin{remark}\label{r1}
We can also consider the sub-game of  \eqref{e1}
\begin{equation}\label{e13}
u_\sigma^i(\rho,\tau)=\E_\sigma\left[U^i(\rho[\tau],\tau[\rho])\right],\quad(\rho,\tau)\in\kT_\sigma^2,\quad\sigma\in\cT,\quad i=1,2,
\end{equation}
where
$$\kT_\sigma:=\{(\rho_0,\rho_1)\in\kT:\ \rho_0\geq\sigma\}.$$
Follow the same proof as above, we can show that $((\rho_\sigma^\eps,\rho_h),(\tau_\sigma^\eps,\rho_h))\in\kT_\sigma^2$ is a $5\eps$-Nash equilibrium for \eqref{e13} when $U^1=-U^2=U$, where $(\rho_\sigma^\eps,\tau_\sigma^\eps)$ is an $\eps$-saddle point for the sub-Dynkin game
\begin{eqnarray}
\notag v_\sigma&:=&\sup_{\rho\in\cT_\sigma}\inf_{\tau\in\cT_\sigma}\E_\sigma\left[X_{\rho_0} 1_{\{\rho_0<\tau_0\}}+Y_{\tau_0} 1_{\{\rho_0>\tau_0\}}+Z_{\rho_0} 1_{\{\rho_0=\tau_0\}}\right]\\
\notag &=& \inf_{\tau\in\cT_\sigma}\sup_{\rho\in\cT_\sigma}\E_\sigma\left[X_{\rho_0} 1_{\{\rho_0<\tau_0\}}+Y_{\tau_0} 1_{\{\rho_0>\tau_0\}}+Z_{\rho_0} 1_{\{\rho_0=\tau_0\}}\right].
\end{eqnarray}
Moreover, as can be seen from the proof of \prref{p1}, for any $(\rho_0,\rho_1)\in\kT_\sigma$,
$$u_\sigma^1((\rho_0,\rho_1),(\tau_\sigma^\eps,\tau_h))\leq v_\sigma+4\eps.$$
\end{remark}

\section{Proof of \thref{t1}}
Throughout this section, we fix $\eps>0$. For $t\in[0,\infty]$, let
\begin{equation}\notag
X_t^1:=\inf_{\sigma\in\cT_t}\E_t\left[U^1(t,\sigma)\right],\quad Y_t^1:=\sup_{\sigma\in\cT_t}\E_t\left[U^1(\sigma,t)\right],\quad Z_t^1=U^1(t,t),
\end{equation}
and
\begin{equation}\notag
X_t^2:=\sup_{\sigma\in\cT_t}\E_t\left[U^2(t,\sigma)\right],\quad Y_t^2:=\inf_{\sigma\in\cT_t}\E_t\left[U^2(\sigma,t)\right],\quad Z_t^2=U^2(t,t).
\end{equation}
Similar to \eqref{e14} and \eqref{e15}, for $i=1,2$, let $\bar\rho^i(nh)$ and $\bar\tau^i(nh)$ be $\eps$-optimizers for $Y_{nh}^i$ and $X_{nh}^i$ respectively, and define
$$\rho_h^i(t):=\bar\rho^i(([t/h]+1)h)\quad\text{and}\quad\tau_h^i(t):=\bar\tau^i(([t/h]+1)h).$$
By \leref{l2}, we choose $h$ small enough with $r(h)<\eps/3$, such that for $i=1,2$, and any $\rho_0,\tau_0\in\cT$,
$$\left|X_{\rho_0}^i-\E_{\rho_0}\left[U^i(\rho_0,\tau_h^i(\rho_0))\right]\right|\leq 2\eps\quad\text{and}\quad\left|Y_{\tau_0}^i-\E_{\tau_0}\left[U^i(\rho_h^i(\tau_0),\tau_0)\right]\right|\leq 2\eps.$$

For $t\in[0,\infty]$, let us define
\begin{equation}\notag
W_t^1:=\E_t\left[U^1(t,\tau_h^2(t))\right]\quad\text{and}\quad W_t^2:=\E_t\left[U^2(\rho_h^1(t),t)\right].
\end{equation}

\begin{lemma}\label{l3}
For $i=1,2$, the process $W^i=(W_t^i)_{t\in[0,\infty]}$ is right continuous.
\end{lemma}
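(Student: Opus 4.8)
The plan is to prove the right continuity of $W^i$ directly from the definition, mimicking the argument used for \leref{l1}, the key extra ingredient being that $\rho_h^1$ and $\tau_h^2$ are themselves right continuous in $t$ for each $\omega$ and take values in a locally finite set of ``grid-shifted'' stopping times. I will argue for $W^1$; the case of $W^2$ is symmetric. Fix $t\in[0,\infty)$ and a sequence $t_n\searrow t$ with $|t_n-t|<1/n$. I want to show $W^1_{t_n}\to W^1_t$, i.e.\ $\E_{t_n}[U^1(t_n,\tau_h^2(t_n))]\to\E_t[U^1(t,\tau_h^2(t))]$.

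First I would split the difference into two pieces using the triangle inequality: one piece controlling the change in the first argument and in the outer conditioning, the other controlling the change in $\tau_h^2$. For the first piece, note that by \asref{a1}, $|U^1(t_n,\tau_h^2(t_n))-U^1(t,\tau_h^2(t_n))|\le r(|t_n-t|)\le r(1/n)\to 0$, so it suffices to handle $\E_{t_n}[U^1(t,\tau_h^2(t_n))]-\E_t[U^1(t,\tau_h^2(t))]$. Here the crucial observation is the structure of $\tau_h^2$: by construction $\tau_h^2(s)=\bar\tau^2(([s/h]+1)h)$ depends on $s$ only through the index $[s/h]$, which is right continuous and locally constant in $s$. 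Hence for $n$ large enough that $t$ and $t_n$ lie in the same interval $[kh,(k+1)h)$ — which happens eventually since $t_n\searrow t$ and if $t$ is itself a grid point $kh$ then still $t_n\in[kh,(k+1)h)$ for large $n$ — we have $\tau_h^2(t_n)=\tau_h^2(t)$ identically. So for such $n$ the remaining difference is simply $\E_{t_n}[U^1(t,\tau_h^2(t))]-\E_t[U^1(t,\tau_h^2(t))]$, which is the difference of conditional expectations of a single fixed integrable random variable along the filtration, and this tends to $0$ by the right continuity of the filtration (martingale convergence / Lévy's theorem, or \cite[Theorem~D.7]{KS2} as already invoked in \leref{l1}).

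Putting these together: $\limsup_n|W^1_{t_n}-W^1_t|\le\limsup_n r(1/n)+\limsup_n|\E_{t_n}[U^1(t,\tau_h^2(t))]-\E_t[U^1(t,\tau_h^2(t))]|=0$. Since $U^1$ is bounded, dominated convergence also lets me pass from a.s.\ convergence to the convergence needed, and since $\Omega$ is at most countable everything is automatically measurable and no path-regularity subtleties arise. The same computation with $U^2$ and $\rho_h^1$ in place of $U^1$ and $\tau_h^2$ yields the right continuity of $W^2$.

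The main obstacle — really the only non-routine point — is the interchange argument in the second piece: one must use that $\tau_h^2$ is piecewise constant on the dyadic-type grid $\{[kh,(k+1)h)\}$ so that $\tau_h^2(t_n)$ coincides with $\tau_h^2(t)$ for all large $n$, thereby reducing the problem to right continuity of $s\mapsto\E_s[\xi]$ for a fixed $\xi$. If instead $t$ were a left endpoint being approached from the left this would fail, but we only need right continuity, and the grid structure of $\tau_h^2$ is tailored exactly to that. One should also note at the start that $U^1(t,\tau_h^2(t))$ is integrable (indeed bounded) so that the conditional expectations and the martingale convergence step are all legitimate.
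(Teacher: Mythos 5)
Your proposal is correct and follows essentially the same route as the paper: it exploits the piecewise-constant (grid) structure of $\tau_h^2$ to get $\tau_h^2(t_n)=\tau_h^2(t)$ for large $n$, bounds the change in the first argument by $r(1/n)$ via \asref{a1}, and reduces the remaining term to the right continuity of $s\mapsto\E_s[\xi]$ for a fixed bounded $\xi$, which the paper justifies by the right continuity of martingales under the usual conditions (\cite[Theorem 3.13]{KS}) rather than \cite[Theorem D.7]{KS2}. The only cosmetic difference is that the paper builds the grid condition into the choice of $t_n$ (requiring $|t_n-t|<(1/n)\wedge(([t/h]+1)h-t)$), while you verify it separately, including the grid-point case.
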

\begin{proof}
Fix $t\in[0,\infty)$. Let $t_n\searrow t$ with $|t_n-t|<(1/n)\wedge([t/h]+1)h-t)$. Obviously, $\tau_h^2(t_n)=\tau_h^2(t)$. Then
$$|W_{t_n}^1-W_t^1|\leq\left|\E_{t_n}\left[U(t_n,\tau_h^2(t))\right]-\E_{t_n}\left[U(t,\tau_h^2(t))\right]\right|+\left|\E_{t_n}\left[U(t,\tau_h^2(t))\right]-\E_t\left[U(t,\tau_h^2(t))\right]\right|.$$
On the right-hand-side of the inequality above, denote the first term as $I$ and the second term as $J$. We have that
$$I\leq\E_{t_n}\left|U(t_n,\tau_h^2(t))-U(t,\tau_h^2(t))\right|\leq r(1/n)\rightarrow 0.$$
Now consider $J$. Observe that the process
$$\left(f_s:=\E_{t+s}\left[U(t,\tau_h^2(t))\right]\right)_{s\in[0,\infty]}$$
is a martingale w.r.t. the filtration $(\cF_{t+s})_{s\in[0,\infty]}$ satisfying the usual conditions. By \cite[Theorem 3.13, page 16]{KS}, $(f_s)_{s\in[0,\infty]}$ is right continuous. Hence, $J\rightarrow 0$.
\end{proof}

Consider the sub-Dynkin game $G_t^1$
\begin{eqnarray}
\label{e16} v_t^1&:=&\sup_{\rho_0\in\cT_t}\inf_{\tau_0\in\cT_t}\E_t\left[X_{\rho_0}^1 1_{\{\rho_0<\tau_0\}}+Y_{\tau_0}^1 1_{\{\rho_0>\tau_0\}}+Z_{\rho_0}^1 1_{\{\rho_0=\tau_0\}}\right]\\
\notag &=&\inf_{\tau_0\in\cT_t}\sup_{\rho_0\in\cT_t}\E_t\left[X_{\rho_0}^1 1_{\{\rho_0<\tau_0\}}+Y_{\tau_0}^1 1_{\{\rho_0>\tau_0\}}+Z_{\rho_0}^1 1_{\{\rho_0=\tau_0\}}\right],
\end{eqnarray}
and the game $G_t^2$
\begin{eqnarray}
\label{e17} v_t^2&:=&\inf_{\rho_0\in\cT_t}\sup_{\tau_0\in\cT_t}\E_t\left[X_{\rho_0}^2 1_{\{\rho_0<\tau_0\}}+Y_{\tau_0}^2 1_{\{\rho_0>\tau_0\}}+Z_{\rho_0}^2 1_{\{\rho_0=\tau_0\}}\right]\\
\notag &=&\sup_{\tau_0\in\cT_t}\inf_{\rho_0\in\cT_t}\E_t\left[X_{\rho_0}^2 1_{\{\rho_0<\tau_0\}}+Y_{\tau_0}^2 1_{\{\rho_0>\tau_0\}}+Z_{\rho_0}^2 1_{\{\rho_0=\tau_0\}}\right].
\end{eqnarray}
By \cite[Lemma 8]{Solan}, the process $(v_t^i)_{t\in[0,\infty]}$ is right continuous for $i=1,2$. Define stopping times
$$\mu_1:=\inf\{t\geq 0:\ v_t^1\leq W_t^1+\eps\}\quad\text{and}\quad\mu_2:=\inf\{t\geq 0:\ v_t^2\leq W_t^2+\eps\}.$$
Since $X^1\leq W^1$, we have that
$$\mu_1\leq\inf\{t\geq 0:\ v_t^1\leq X_t^1+\eps\}.$$
Therefore, the process $(v_t^1)_{t\in[0,\mu_1]}$ is a sub-martingale. Similarly, $(v_t^2)_{t\in[0,\mu_2]}$ is a sub-martingale.

Let $\delta>0$ that will be chosen later on. For $i=1,2$, let $(\rho_{\mu_i+\delta}^i,\tau_{\mu_i+\delta}^i)\in\cT^2_{\mu_i+\delta}$ be an $\eps$-saddle point for game $G_{\mu_i+\delta}^i$ defined in \eqref{e16} and \eqref{e17}. Define
\[ 
\rho_0^*:=
\begin{cases} 
      \mu_1, & \text{ if }\mu_1\leq\mu_2,\\
      \rho_{\mu_2+\delta}^2, & \text{ if }\mu_1>\mu_2,\\
\end{cases}
\quad\quad
\rho_1^*(t):=
\begin{cases} 
      \rho_h^2(t), & \text{ if }t\geq\mu_1\wedge\mu_2+\delta\text{ and } \mu_1>\mu_2\\
      \rho_h^1(t), & \text{ otherwise},\\
\end{cases}
\]
and
\[ 
\tau_0^*:=
\begin{cases} 
      \tau_{\mu_1+\delta}^1, & \text{ if }\mu_1\leq\mu_2,\\
      \mu_2, & \text{ if }\mu_1>\mu_2,\\
\end{cases}
\quad\quad
\tau_1^*(t):=
\begin{cases} 
      \tau_h^1(t), & \text{ if }t\geq\mu_1\wedge\mu_2+\delta\text{ and } \mu_1\leq\mu_2\\
      \tau_h^2(t), & \text{ otherwise}.\\
\end{cases}
\]
Let $\rho^*:=(\rho_0^*,\rho_1^*)$ and $\tau^*:=(\tau_0^*,\tau_1^*)$. It can been shown that $\rho^*,\tau^*\in\kT$.

\begin{proposition}
For $\delta$ small enough, $(\rho^*,\tau^*)$ is an $18\eps$-Nash equilibrium for the non-zero-sum game \eqref{e1}. Therefore, \thref{t1} holds. 
\end{proposition}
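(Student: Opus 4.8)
The plan is to verify the two inequalities in \deref{d1}; by the evident symmetry of the construction (interchange the two players, swap $\mu_1\leftrightarrow\mu_2$, and replace $U^1,X^1,Y^1,Z^1,v^1,W^1$ by $U^2,X^2,Y^2,Z^2,v^2,W^2$) it suffices to treat the first player. Put $\mu:=\mu_1\wedge\mu_2$ and
\[
M:=W_{\mu_1}^1\,1_{\{\mu_1\leq\mu_2\}}+Y_{\mu_2}^1\,1_{\{\mu_1>\mu_2\}},
\]
which is $\cF_\mu$-measurable since $\{\mu_1\leq\mu_2\}\in\cF_\mu$ and $\mu=\mu_1$ (resp.\ $\mu=\mu_2$) on this set (resp.\ its complement). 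I will show that, for $\delta>0$ small,
\[
u^1(\rho^*,\tau^*)\geq\E[M]-2\eps\qquad\text{and}\qquad u^1(\rho,\tau^*)\leq\E[M]+6\eps\quad\text{for every }\rho\in\kT .
\]
Subtracting these (and doing the same for player $2$) shows $(\rho^*,\tau^*)$ is an $18\eps$-Nash equilibrium, and \thref{t1} follows after replacing $\eps$ by $\eps/18$.

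For the lower bound I would merely unwind the definitions of $\rho^*,\tau^*$. On $\{\mu_1\leq\mu_2\}$ one has $\rho_0^*=\mu_1<\mu_1+\delta\le\tau_0^*$ and $\tau_1^*(\mu_1)=\tau_h^2(\mu_1)$, so the realized payoff to player $1$ is $U^1(\mu_1,\tau_h^2(\mu_1))$ and $\E_{\mu_1}[\cdot]=W_{\mu_1}^1$; on $\{\mu_1>\mu_2\}$ one has $\tau_0^*=\mu_2<\mu_2+\delta\le\rho_0^*$ and $\rho_1^*(\mu_2)=\rho_h^1(\mu_2)$, so the realized payoff is $U^1(\rho_h^1(\mu_2),\mu_2)$ and the estimate displayed just before \leref{l3} (i.e.\ \leref{l2} applied to $U^1$) gives $\E_{\mu_2}[\cdot]\geq Y_{\mu_2}^1-2\eps$. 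Taking expectations yields $u^1(\rho^*,\tau^*)\geq\E[M]-2\eps$.

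For the upper bound, fix $\rho=(\rho_0,\rho_1)\in\kT$ and set $P:=U^1(\rho[\tau^*],\tau^*[\rho])$. I would split $\Omega$ first along $\{\rho_0<\mu\}$ versus $\{\rho_0\geq\mu\}$, and only afterwards — on $\{\rho_0\geq\mu\}$ — along the $\cF_\mu$-measurable sets $\{\mu_1\leq\mu_2\}$, $\{\mu_1>\mu_2\}$. On $\{\rho_0<\mu\}$ one checks that $\rho_0<\tau_0^*$ and $\tau_1^*(\rho_0)=\tau_h^2(\rho_0)$ (the branch $\tau_1^*=\tau_h^1$ needs $\rho_0\ge\mu+\delta$), hence $\E_{\rho_0}[P\,1_{\{\rho_0<\mu\}}]=W_{\rho_0}^1\,1_{\{\rho_0<\mu\}}$, and by the definition of $\mu_1$, $W_{\rho_0}^1<v_{\rho_0}^1-\eps$ there; since $(v_t^1)_{t\in[0,\mu_1]}$ is a submartingale so is $(v_{t\wedge\mu}^1)_t$, and optional sampling with the $\cF_{\rho_0\wedge\mu}$-measurable set $\{\rho_0<\mu\}$ gives $\E[v_{\rho_0}^1 1_{\{\rho_0<\mu\}}]\le\E[v_\mu^1 1_{\{\rho_0<\mu\}}]$; together with $v_\mu^1\leq M+\eps$ (from $v_{\mu_1}^1\leq W_{\mu_1}^1+\eps$ on $\{\mu_1\le\mu_2\}$, by right continuity of $v^1-W^1$ and \leref{l1}, \leref{l3}, and from $v_{\mu_2}^1\le Y_{\mu_2}^1$) this yields $\E[P\,1_{\{\rho_0<\mu\}}]\le\E[M\,1_{\{\rho_0<\mu\}}]$. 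On $\{\rho_0\geq\mu\}\cap\{\mu_1>\mu_2\}$ player $2$ has stopped first at $\tau_0^*=\mu_2$ (or there is a tie at $\mu_2$), so conditionally on $\cF_{\mu_2}$ one has $P\le Y_{\mu_2}^1=M$ and hence $\E[P\,1_{\{\rho_0\geq\mu\}\cap\{\mu_1>\mu_2\}}]\le\E[M\,1_{\{\rho_0\geq\mu\}\cap\{\mu_1>\mu_2\}}]$. On $\{\rho_0\geq\mu\}\cap\{\mu_1\leq\mu_2\}$ (where $\tau_0^*=\tau_{\mu_1+\delta}^1\ge\mu_1+\delta$): if $\mu_1\le\rho_0<\mu_1+\delta$ then $\E_{\rho_0}[P]=W_{\rho_0}^1\le W_{\mu_1}^1+\Delta'_\delta=M+\Delta'_\delta$ with $\Delta'_\delta:=\sup_{0\le s\le\delta}(W_{\mu_1+s}^1-W_{\mu_1}^1)^+$; if $\rho_0\geq\mu_1+\delta$ then $(\rho_0,\rho_1)\in\kT_{\mu_1+\delta}$ and, since $\tau_0^*=\tau_{\mu_1+\delta}^1$ and $\tau_1^*(t)=\tau_h^1(t)$ for $t\ge\mu_1+\delta$, the same reasoning as in \reref{r1} applied with $U^1$ in place of $U$ (and $\sigma=\mu_1+\delta$) gives $\E_{\mu_1+\delta}[P]\le v_{\mu_1+\delta}^1+4\eps\le v_{\mu_1}^1+4\eps+\Delta''_\delta\le M+5\eps+\Delta''_\delta$, $\Delta''_\delta:=\sup_{0\le s\le\delta}(v_{\mu_1+s}^1-v_{\mu_1}^1)^+$. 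Summing the three contributions, $u^1(\rho,\tau^*)\leq\E[M]+5\eps+\E[\Delta'_\delta+\Delta''_\delta]$; since $v^1,W^1$ are bounded and right continuous (\leref{l1}, \leref{l3}, and \cite[Lemma~8]{Solan}), $\Delta'_\delta+\Delta''_\delta\downarrow0$ a.s.\ as $\delta\downarrow0$, so by dominated convergence I may fix $\delta$ with $\E[\Delta'_\delta+\Delta''_\delta]<\eps$, giving the claimed bound.

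The step I expect to be most delicate is this last one — not the inequalities, which are bookkeeping, but keeping the conditioning honest: one must never carry an indicator such as $1_{\{\mu_1\le\mu_2\}}$ (measurable only at the time $\mu=\mu_1\wedge\mu_2$) across a conditional expectation taken at an earlier time, which is precisely why the first split is on $\rho_0$ versus $\mu$ rather than on $\{\mu_1\le\mu_2\}$, and why the pivot $M$ is built so as to absorb, on its two $\cF_\mu$-pieces, respectively the "player-$1$-stops-first" value $W_{\mu_1}^1$ and the "player-$2$-stops-first" value $Y_{\mu_2}^1$. A secondary point is that $\delta$ must be chosen after $\eps$, using the right continuity of $v^i$ and $W^i$ at $\mu_1,\mu_2$. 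Carrying out the symmetric computation for player $2$ then gives $u^2(\rho^*,\tau)\leq u^2(\rho^*,\tau^*)+8\eps$ for all $\tau\in\kT$, completing the proof.
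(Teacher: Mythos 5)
Your proof is correct, and it follows the paper's overall strategy (the pivot quantity $\E[W^1_{\mu_1}1_{\{\mu_1\leq\mu_2\}}+Y^1_{\mu_2}1_{\{\mu_1>\mu_2\}}]$, the submartingale property of $v^1$ before $\mu_1$, the definition of $\mu_1$, and \reref{r1} after $\mu_1+\delta$), but your case decomposition is genuinely different from the paper's and in places cleaner. The paper splits $\Omega$ into five sets, and because the sets $\{\mu_1\wedge\mu_2\leq\rho_0<\mu_1\wedge\mu_2+\delta\}\cap\{\mu_1>\mu_2\}$ and $\{\rho_0\geq\mu_1\wedge\mu_2+\delta\}\cap\{\mu_1>\mu_2\}$ are not $\cF_{\mu_2}$-measurable, it cannot condition at $\mu_2$ there; instead it shifts the payoff to $(\rho_1(\mu_2)+\delta,\mu_2+\delta)$, conditions at $\mu_2+\delta$, and pays $r(\delta)$ plus the right continuity of $Y^1$ (Cases 3 and 5). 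You avoid this entirely by not splitting at $\mu+\delta$ on the branch $\{\mu_1>\mu_2\}$: your coarser event $\{\rho_0\geq\mu\}\cap\{\mu_1>\mu_2\}$ \emph{is} $\cF_{\mu_2}$-measurable, so conditioning directly at $\mu_2$ and using $\E_{\mu_2}[U^1(\rho_1(\mu_2),\mu_2)]\leq Y^1_{\mu_2}$, $Z^1\leq Y^1$ gives the bound with no $\eps$-loss; note this uses that $\rho_1(\mu_2)$ is a stopping time in $\cT_{\mu_2}$, which holds here because $\Omega$ is countable (the paper needs the analogous fact for $\rho_1(\mu_2)+\delta$, so you are on equal footing). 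Likewise, in the window $\mu_1\leq\rho_0<\mu_1+\delta$ you condition at $\rho_0$ and control $W^1_{\rho_0}-W^1_{\mu_1}$ by a pathwise modulus of right continuity plus dominated convergence, whereas the paper freezes the grid point of $\tau_h^2$ via the estimate $\PP\{([\mu_1/h]+1)h-\mu_1<2\delta\}<\eps/M$ and uses $r(\delta)$; both work, yours dispenses with the extra probability estimate. Your bookkeeping yields an $8\eps$-equilibrium versus the paper's $18\eps$, which is immaterial for \thref{t1}. Two small points to tighten: the assertion ``$(\rho_0,\rho_1)\in\kT_{\mu_1+\delta}$'' holds only on the event $\{\rho_0\geq\mu_1+\delta\}$, so to invoke \reref{r1} you should replace $\rho_0$ by, say, $\rho_0\vee(\mu_1+\delta)$ (which leaves the payoff on that event unchanged) --- the paper's Case 4 glosses over the same localization; and the choice of $\delta$ must be made simultaneously for both players' moduli, as you indicate at the end.
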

\begin{proof}
\begin{eqnarray}
\notag u^1(\rho^*,\tau^*)&=&\E\left[U^1\left(\rho^*[\tau^*],\tau^*[\rho^*]\right)\right]\\
\notag &=& \E\left[U^1\left(\rho^*[\tau^*],\tau^*[\tau^*]\right)\left(1_{\{\mu_1\leq\mu_2\}}+1_{\{\mu_1>\mu_2\}}\right)\right]\\
\notag &=& \E\left[U^1\left(\mu_1,\tau_h^2(\mu_1)\right)1_{\{\mu_1\leq\mu_2\}}+U^1\left(\rho_h^1(\mu_2),\mu_2\right)1_{\{\mu_1>\mu_2\}}\right]\\
\notag &=& \E\left[\E_{\mu_1}\left[U^1\left(\mu_1,\tau_h^2(\mu_1)\right)\right]1_{\{\mu_1\leq\mu_2\}}+\E_{\mu_2}\left[U^1\left(\rho_h^1(\mu_2),\mu_2\right)\right]1_{\{\mu_1>\mu_2\}}\right]\\
\notag &\geq& \E\left[W_{\mu_1}^1 1_{\{\mu_1\leq\mu_2\}}+Y_{\mu_2}^1 1_{\{\mu_1>\mu_2\}}\right]-\eps.
\end{eqnarray}
Now for any $\rho=(\rho_0,\rho_1)\in\kT$, we consider $u^1(\rho,\tau^*)=\E[U^1(\rho[\tau^*],\tau^*[\rho])]$. We discuss five cases.

\textbf{Case 1}: $A_1:=\{\rho_0<\mu_1\wedge\mu_2\}$. We have that
\begin{eqnarray}
\notag \E\left[U^1\left(\rho[\tau^*],\tau^*[\rho]\right)1_{A_1}\right]&=&\E\left[U^1\left(\rho_0,\tau_h^2(\rho_0)\right)1_{A_1}\right]\\
\notag &=&\E\left[W_{\rho_0}^1 1_{A_1}\right]\\
\notag &\leq&\E\left[v_{\rho_0}^1 1_{A_1}\right]\\
\notag &=&\E\left[v_{\rho_0\wedge\mu_1\wedge\mu_2}^1 1_{A_1}\right]\\
\notag &\leq&\E\left[\E_{\rho_0\wedge\mu_1\wedge\mu_2}\left[v_{\mu_1\wedge\mu_2}^1\right] 1_{A_1}\right]\\
\notag &=&\E\left[v_{\mu_1\wedge\mu_2}^1 1_{A_1}\right]\\
\notag &=&\E\left[\left(v_{\mu_1}^1 1_{\{\mu_1\leq\mu_2\}}+v_{\mu_2}^1 1_{\{\mu_1>\mu_2\}}\right)1_{A_1}\right]\\
\notag &\leq&\E\left[\left(W_{\mu_1}^1 1_{\{\mu_1\leq\mu_2\}}+Y_{\mu_2}^1 1_{\{\mu_1>\mu_2\}}\right)1_{A_1}\right]+\eps.
\end{eqnarray}

\textbf{Case 2}: $A_2:=\{\mu_1\wedge\mu_2\leq\rho_0<\mu_1\wedge\mu_2+\delta\}\cap\{\mu_1\leq\mu_2\}$. By \leref{l3}
$$\E\left|W_{\mu_1+\delta}^1-W_{\mu_1}^1\right|\rightarrow 0,\quad\delta\searrow 0,$$
and
$$\PP\left\{([\mu_1/h]+1)h-\mu_1<2\delta\right\}\rightarrow 0,\quad\delta\searrow 0.$$
We choose $\delta$ small enough, such that $r(\delta)<\eps$, and
$$\E\left|W_{\mu_1+\delta}^1-W_{\mu_1}\right|<\eps$$
and
$$\PP\left\{([\mu_1/h]+1)h-\mu_1<2\delta\right\}<\eps/M,$$
where $M>0$ is a constant such that $|U^1|<M$ uniformly in $(s,t,\omega)$. Then
\begin{eqnarray}
\notag \E\left[U^1\left(\rho[\tau^*],\tau^*[\rho]\right)1_{A_2}\right]&=&\E\left[U^1\left(\rho_0,\tau_h^2(\rho_0)\right)1_{A_2}\right]\\
\notag &\leq&\E\left[U^1\left(\rho_0,\tau_h^2(\mu_1+\delta)\right)1_{A_2}\right]+2\eps\\
\notag &\leq&\E\left[U^1\left(\mu_1+\delta,\tau_h^2(\mu_1+\delta)\right)1_{A_2}\right]+3\eps\\
\notag &=&\E\left[W_{\mu_1+\delta}^1 1_{A_2}\right]+3\eps\\
\notag &\leq&\E\left[W_{\mu_1}^1 1_{A_2}\right]+4\eps\\
\notag &=&\E\left[\left(W_{\mu_1}^1 1_{\{\mu_1\leq\mu_2\}}+Y_{\mu_2}^1 1_{\{\mu_1>\mu_2\}}\right)1_{A_2}\right]+4\eps.
\end{eqnarray}

\textbf{Case 3}: $A_3:=\{\mu_1\wedge\mu_2\leq\rho_0<\mu_1\wedge\mu_2+\delta\}\cap\{\mu_1>\mu_2\}$. We choose $\delta$ small enough, such that $r(\delta)<\eps$ and
$$\E\left|Y_{\mu_2+\delta}^1-Y_{\mu_2}^1\right|<\eps.$$
Then we have that
\begin{eqnarray}
\notag \E\left[U^1\left(\rho[\tau^*],\tau^*[\rho]\right)1_{A_3}\right]&=&\E\left[\left(U^1\left(\mu_2,\mu_2\right)1_{\{\rho_0=\mu_2\}}+U^1\left(\rho_1(\mu_2),\mu_2\right)1_{\{\rho_0>\mu_2\}}\right)1_{A_3}\right]\\
\notag &\leq&\E\left[\left(Y_{\mu_2}^1 1_{\{\rho_0=\mu_2\}}+U^1\left(\rho_1(\mu_2)+\delta,\mu_2+\delta\right)1_{\{\rho_0>\mu_2\}}\right)1_{A_3}\right]+2\eps\\
\notag &\leq&\E\left[\left(Y_{\mu_2}^1 1_{\{\rho_0=\mu_2\}}+Y_{\mu_2+\delta}^1 1_{\{\rho_0>\mu_2\}}\right)1_{A_3}\right]+2\eps\\
\notag &\leq&\E\left[Y_{\mu_2}^11_{A_3}\right]+3\eps\\
\notag &=&\E\left[\left(W_{\mu_1}^1 1_{\{\mu_1\leq\mu_2\}}+Y_{\mu_2}^1 1_{\{\mu_1>\mu_2\}}\right)1_{A_3}\right]+3\eps.
\end{eqnarray}

\textbf{Case 4}: $A_4:=\{\rho_0\geq\mu_1\wedge\mu_2+\delta\}\cap\{\mu_1\leq\mu_2\}$. We choose $\delta$ small enough, such that
$$\E\left|v_{\mu_1+\delta}^1-v_{\mu_1}^1\right|<\eps.$$
By \reref{r1}, we have that
\begin{eqnarray}
\notag \E\left[U^1\left(\rho[\tau^*],\tau^*[\rho]\right)1_{A_4}\right]&=&\E\left[\E_{\mu_1+\delta}\left[U^1\left(\rho[\tau^*],\tau^*[\rho]\right)\right]1_{A_4}\right]\\
\notag &\leq&\E\left[v_{\mu_1+\delta}^1 1_{A_4}\right]+4\eps\\
\notag &\leq&\E\left[v_{\mu_1}^1 1_{A_4}\right]+5\eps\\
\notag &\leq&\E\left[W_{\mu_1}^1 1_{A_4}\right]+6\eps\\
\notag &=&\E\left[\left(W_{\mu_1}^1 1_{\{\mu_1\leq\mu_2\}}+Y_{\mu_2}^1 1_{\{\mu_1>\mu_2\}}\right)1_{A_4}\right]+6\eps.
\end{eqnarray}

\textbf{Case 5}: $A_5:=\{\rho_0\geq\mu_1\wedge\mu_2+\delta\}\cap\{\mu_1>\mu_2\}$. We choose $\delta$ small enough, such that $r(\delta)<\eps$, and
$$\E\left|Y_{\mu_2+\delta}^1-Y_{\mu_2}^1\right|<\eps.$$
Then
\begin{eqnarray}
\notag \E\left[U^1\left(\rho[\tau^*],\tau^*[\rho]\right)1_{A_5}\right]&=&\E\left[U^1\left(\rho_1(\mu_2),\mu_2\right)1_{A_5}\right]\\
\notag &\leq&\E\left[U^1\left(\rho_1(\mu_2)+\delta,\mu_2+\delta\right)1_{A_5}\right]+2\eps\\
\notag &\leq&\E\left[Y_{\mu_2+\delta}^1 1_{A_5}\right]+2\eps\\
\notag &\leq&\E\left[Y_{\mu_2}^1 1_{A_5}\right]+3\eps\\
\notag &=&\E\left[\left(W_{\mu_1}^1 1_{\{\mu_1\leq\mu_2\}}+Y_{\mu_2}^1 1_{\{\mu_1>\mu_2\}}\right)1_{A_5}\right]+3\eps.
\end{eqnarray}

From Cases 1-5, we have that
$$u^1(\rho,\tau^*)\leq u^1(\rho^*,\tau^*)+18\eps,$$
for any $\rho\in\kT$ by choosing $\delta$ small enough. Similar we can show that
$$u^2(\rho^*,\tau)\leq u^2(\rho^*,\tau^*)+18\eps,$$
for any $\tau\in\kT$ by choosing $\delta$ small enough.
\end{proof}

\bibliographystyle{siam}
\bibliography{ref}

\end{document}